\documentclass[11pt]{amsproc}
 \usepackage[margin=1in]{geometry}
\usepackage{setspace,fullpage}
\geometry{letterpaper}

\usepackage{graphicx}
\usepackage[nice]{nicefrac}
\usepackage{amssymb}
\DeclareGraphicsRule{.tif}{png}{.png}{`convert #1 `dirname #1`/`basename #1.tif`.png}
\usepackage{amsmath,amsthm,amscd,amssymb, mathrsfs}

\usepackage{latexsym}

\numberwithin{equation}{section}

\theoremstyle{plain}
\newtheorem{theorem}{Theorem}[section]
\newtheorem{lemma}[theorem]{Lemma}

\newtheorem{conjecture}[theorem]{Conjecture}

\theoremstyle{definition}

\theoremstyle{remark}
\newtheorem{remark}[theorem]{Remark}

\newtheorem{case[theorem]}{Case}

\title[\parbox{14cm}{\centering{ Sharp endpoint estimates for the $X$-ray transform and the Radon transform in finite fields \hspace{1in}}} \quad]{Sharp endpoint estimates for the $X$-ray transform and the Radon transform in finite fields }
\author{ Doowon Koh }

\address{Department of Mathematics\\
Chungbuk National University \\
Cheongju city, Chungbuk-Do 361-763 Korea}
\email{koh131@chungbuk.ac.kr}

\thanks{Key words and phrases:$k$-plane transform, $X$-ray transform, Radon transform, finite fields.\\
This research was supported by Basic Science Research Program through the National Research Foundation of Korea(NRF) funded by the Ministry of Education, Science and Technology(2012010487)
}

\subjclass[2010]{ 43A32, 43A15}

\begin{document}

\begin{abstract}This note establishes sharp $L^p-L^r$ estimates for $X$-ray transforms and Radon transforms in finite fields.
\end{abstract}

\maketitle

\section{Introduction}
Let $\mathbb F_q^d, d\geq 2,$ be a $d$-dimensional vector space over the finite field $\mathbb F_q$ with $q$ elements. We endow $\mathbb F_q^d$ with a normalized counting measure $dx.$  For each $q$, we denote by $M_q$  a collection of certain subsets of $\mathbb F_q^d.$  Recall that a normalized surface measure $d\sigma$ on $M_q$ can be defined by the relation
$$ \int\Omega(w) d\sigma(w) = \frac{1}{|M_q|} \sum_{w\in M_q} \Omega(w)$$
where $|M_q|$ denotes the cardinality of $M_q$ and $\Omega$ is a complex-valued function on $M_q.$ 
For any complex-valued function $f$ on $(\mathbb F_q^d,dx)$ and $w\in M_q$, we consider an operator $T_{M_q}$ defined by
$$ T_{M_q}f(w)=\frac{1}{|w|} \sum_{x\in w} f(x).$$

We are interested in determining  exponents $1\leq p, r\leq \infty$ such that the following inequality holds:
$$ \|T_{M_q}f\|_{L^r(M_q,d\sigma)} \lesssim \|f\|_{L^p(\mathbb F_q^d,dx)} \quad\mbox{for all}~~ f ~~\mbox{on}~~\mathbb F_q^d,$$
where the operator norm of $T_{M_q}$ is independent of $q$, the size of the underlying finite field $\mathbb F_q.$
If  $M_q$  is $\Pi_k,$ a collection of all $k$-planes in $\mathbb F_q^d$ with $1\leq k\leq d-1$, then the operator $T_{\Pi_k}$ is called as the $k$-plane transform. In particular, $T_{\Pi_1}$ and $T_{\Pi_{(d-1)}}$ are known as the $X$-ray transform and the Radon transform respectively. In Euclidean space,  the complete mapping properties of $k$-plane transforms were proved by M. Christ in \cite{Ch84}. Readers may refer to \cite{So79}, \cite{OS82}, \cite{Dr84}  for the description of  $k$-plane transforms in the Euclidean setting.
In 2008,  Carbery, Stones, and Wright \cite{CSW08} initially studied the mapping properties of $k$-plane transforms in finite fields. Using combinatorial arguments, they proved the following theorem. 
\begin{theorem}\label{Car} Let $d\geq 2$ and $1\leq k\leq d-1$ be an integer. If 
\begin{equation}\label{8}\|T_{\Pi_k}f\|_{L^r(\Pi_k, d\sigma)} \leq C \|f\|_{L^p(\mathbb F_q^d,dx)}\end{equation}
holds with $C$ independent of $|\mathbb F_q|$, then $(1/p, 1/r)$ lies in the convex hull $H$ of 
$$((k+1)/(d+1), 1/(d+1)), (0,0),(1,1)~~\mbox{and}~~(0,1).$$ 
Conversely, if $(1/p, 1/r)$ lies in $H\setminus\left\{ ((k+1)/(d+1), 1/(d+1))\right\},$ then (\ref{8}) holds with $C$ independent of $|\mathbb F_q|.$ Finally, if 
$(1/p,1/r)= ((k+1)/(d+1), 1/(d+1))$, then the restricted type inequality 
\begin{equation}\label{9} \|T_{\Pi_k}f\|_{L^{d+1}( \Pi_k,d\sigma)} \leq C \|f\|_{L^{\frac{d+1}{k+1}, 1}(\mathbb F_q^d,dx)}\end{equation}
holds with $C$ independent of $|\mathbb F_q|.$\end{theorem}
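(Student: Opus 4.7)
I would prove the theorem in three stages matching the three statements. For the \emph{necessity} of $(1/p,1/r)\in H$, I would test (\ref{8}) on two extremal functions and use that $C$ must be $q$-independent. Taking $f=\mathbf{1}_V$ for a fixed $k$-plane $V$ gives $\|f\|_p=q^{(k-d)/p}$ while $T_{\Pi_k}f(V)=1$ forces $\|T_{\Pi_k}f\|_r\gtrsim |\Pi_k|^{-1/r}\sim q^{-(k+1)(d-k)/r}$; letting $q\to\infty$ yields $1/p\leq (k+1)/r$, the edge from $(0,0)$ to the critical vertex. Testing on a single normalized delta $f=q^d\mathbf{1}_{\{x\}}$ and using that the number of $k$-planes through a point is $\sim q^{k(d-k)}$ gives $d/p-(d-k)/r\leq k$, the edge from the critical vertex to $(1,1)$. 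Together with $0\leq 1/p,1/r\leq 1$, these cut out $H$.

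For strong-type sufficiency on $H$ minus the critical vertex, I would first note three trivial corners: $\|T_{\Pi_k}f\|_\infty\leq\|f\|_\infty$ at $(0,0)$; $\|T_{\Pi_k}f\|_1\lesssim\|f\|_1$ at $(1,1)$, by duality and Fubini using that each point lies in the same number of $k$-planes; and $(0,1)$ following from either. These alone cover only the triangle with corners $(0,0),(0,1),(1,1)$, which sits above the diagonal and so misses the critical vertex $((k+1)/(d+1),1/(d+1))$. Real interpolation (Marcinkiewicz) between these trivial corners and the restricted strong-type endpoint then fills out every point of $H$ other than the critical vertex.

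The \emph{critical endpoint} (\ref{9}) is where the real work lies and is the main obstacle. By the atomic decomposition of $L^{(d+1)/(k+1),1}$, it suffices to prove the characteristic-function bound
\[
\|T_{\Pi_k}\mathbf{1}_E\|_{L^{d+1}(\Pi_k,d\sigma)}^{d+1} \lesssim q^{-d(k+1)}|E|^{k+1}
\]
uniformly in $E\subseteq\mathbb{F}_q^d$ and $q$. Expanding the $(d+1)$-th power,
\[
\|T_{\Pi_k}\mathbf{1}_E\|_{L^{d+1}}^{d+1} = \frac{1}{|\Pi_k|\,q^{k(d+1)}}\sum_{(x_0,\ldots,x_d)\in E^{d+1}} N(x_0,\ldots,x_d),
\]
where $N$ counts the $k$-planes containing all of $x_0,\ldots,x_d$. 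I would stratify by the affine dimension $j$ of $\{x_0,\ldots,x_d\}$: for $j\leq k$ one has $N\sim q^{(k-j)(d-k)}$, and $N=0$ for $j>k$. Counting tuples of affine dimension $j$ reduces to the incidence quantity $\sum_{j\text{-planes }P}|E\cap P|^{d+1}$, which I would estimate by interpolating between the pointwise bounds $|E\cap P|\leq q^j$ and $|E\cap P|\leq|E|$ and then summing the resulting series in $j$. The delicacy here, and the source of the difficulty, is that the extremizer switches character with the dyadic scale of $|E|$, so the combinatorial count must be uniform in $|E|$; any $q^\varepsilon$ slack would propagate through interpolation and spoil the $q$-independence at every other exponent on the two critical edges.
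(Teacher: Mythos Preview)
The paper does not prove Theorem~\ref{Car}; it is quoted verbatim as the main result of Carbery--Stones--Wright \cite{CSW08}, and the paper's own contribution (Theorems~\ref{newmain11} and \ref{newradon}) is to upgrade the restricted-type endpoint (\ref{9}) to strong type when $k=1$ or $k=d-1$. So there is no in-paper proof to compare your proposal against.

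That said, your plan reconstructs the Carbery--Stones--Wright argument faithfully, as one can infer from the way the paper reuses that argument in Section~2. The two test functions you propose for necessity (the indicator of a $k$-plane and a point mass) produce exactly the two nontrivial edges of $H$, namely $1/p\le (k+1)/r$ and $d/p-(d-k)/r\le k$. The three trivial corner bounds are correct, and real interpolation with the restricted endpoint does fill $H$ minus the critical vertex. For the endpoint itself, your expansion of the $(d+1)$-th moment and stratification by the affine dimension of $\{x_0,\dots,x_d\}$ is precisely the mechanism the paper attributes to \cite{CSW08} and reuses (see the sets $\Delta(s,i_0,\dots,i_d)$ in the proof of Theorem~\ref{newmain11}).

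The only place your sketch is slightly off is the final combinatorial step. Bounding $\sum_{P}|E\cap P|^{d+1}$ by interpolating the two pointwise bounds $|E\cap P|\le q^j$ and $|E\cap P|\le |E|$ is not quite the right packaging: what one actually needs is the direct count
\[
|\Delta_j|\;\lesssim_d\; |E|^{\,j+1}\,\min(q^j,|E|)^{\,d-j},
\]
obtained by choosing $j+1$ points of the tuple to span the $j$-plane and then placing the remaining $d-j$ points in $E\cap P$. Splitting on whether $|E|\le q^j$ or $|E|>q^j$ then gives, for every $0\le j\le k$, exactly the bound $q^{(k-j)(d-k)}|\Delta_j|\lesssim q^{k(d-k)}|E|^{k+1}$ required. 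This is the ``switch of extremizer with the scale of $|E|$'' you allude to, but it is a case split rather than an interpolation, and one should not route it through $\sum_P|E\cap P|^{d+1}$, which introduces an extra factor coming from the number of $j$-planes through a point.
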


Notice from Theorem \ref{Car} that if one could show that the restricted type inequality (\ref{9}) can be replaced by the strong type inequality, then the mapping properties of the $k$-plane transforms in finite fields would be completely established. Namely, our task would prove the following conjecture.
\begin{conjecture}\label{Radonmain} Let $d\geq 2$ and $1\leq k\leq (d-1)$ be integers. Then, we have 
$$ \|T_{\Pi_k}f\|_{L^{d+1}( \Pi_k, d\sigma)} \leq C \|f\|_{L^{\frac{d+1}{k+1}}(\mathbb F_q^d,dx)} \quad\mbox{for all}~~f ~~\mbox{on}~~\mathbb F_q^d,$$
where $C$ is independent of $|\mathbb F_q|.$ \end{conjecture}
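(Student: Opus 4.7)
My plan is to upgrade the restricted strong type bound (\ref{9}) to full strong type by combining it with the interior strong type estimates of Theorem~\ref{Car} and exploiting the combinatorial structure of $k$-plane incidences in $\mathbb{F}_q^d$. Starting with a non-negative $f$, I would dyadically decompose $f = \sum_n 2^n \mathbf{1}_{E_n}$ with disjoint level sets $E_n = \{x : 2^n \leq f(x) < 2^{n+1}\}$. Since $d+1$ is a positive integer, one can expand
\[ \|T_{\Pi_k}f\|_{L^{d+1}(\Pi_k, d\sigma)}^{d+1} = \sum_{n_1,\ldots,n_{d+1}} 2^{n_1+\cdots+n_{d+1}}\, I(\vec n), \qquad I(\vec n) := \int_{\Pi_k} \prod_{i=1}^{d+1} T_{\Pi_k}(\mathbf{1}_{E_{n_i}})\, d\sigma. \]
Converting to incidences via Fubini, $I(\vec n)$ becomes (up to an explicit $q$-power normalization) the weighted count of tuples $(x_1,\ldots,x_{d+1}) \in \prod_i E_{n_i}$ with weight $N(\vec x) = \binom{d-j}{k-j}_q$, where $j$ is the affine-span dimension of $\vec x$ (and $N(\vec x) = 0$ if $j > k$). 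A naive H\"older bound on $I(\vec n)$ together with (\ref{9}) only reproduces the restricted strong type $\|T_{\Pi_k}f\|_{L^{d+1}} \lesssim \|f\|_{L^{(d+1)/(k+1),1}}$, so a finer off-diagonal estimate is required.

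The core of the argument is to stratify the incidence sum by $j \in \{0, 1, \ldots, k\}$ and rewrite the $j$-th stratum as a sum over $V \in \Pi_j$ of $\prod_i |V \cap E_{n_i}|$, which is the $(d+1)$-linear form associated to the lower-dimensional $j$-plane transform $T_{\Pi_j}$. By induction on $k$ (base case $k = 0$, where $T_{\Pi_0}$ is the identity map), combined with the interior strong type bounds of Theorem~\ref{Car} for $T_{\Pi_j}$ at points of its convex hull away from its own critical vertex, each $j$-stratum should yield an off-diagonal improvement whenever the sizes $|E_{n_i}|$ span multiple dyadic scales, in contrast to the symmetric bound arising from Hölder.

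The main obstacle is the final bookkeeping: after inserting the refined estimate into the dyadic sum, one must verify that the geometric multiplicities $\binom{d-j}{k-j}_q$, the cardinalities $|\Pi_j|$, and the dyadic weights $2^{n_1+\cdots+n_{d+1}}$ balance in such a way that
\[ \sum_{\vec n} 2^{n_1+\cdots+n_{d+1}}\, I(\vec n) \;\lesssim\; q^{-d(k+1)} \Bigl( \sum_n 2^{np} |E_n| \Bigr)^{(d+1)/p}, \qquad p = \tfrac{d+1}{k+1}, \]
rather than the weaker $\lesssim \bigl(\sum_n 2^n |E_n|^{1/p}\bigr)^{d+1}$ that a symmetric Hölder produces. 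Carrying out this collapse cleanly---most likely via Möbius inversion on the lattice of affine subspaces together with a Schur-test or convexity argument---is the delicate heart of the proof, and it is where the exact $q$-binomial arithmetic of $\mathbb{F}_q^d$ must be invoked in an essential way.
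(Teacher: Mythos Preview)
The statement you are attempting is Conjecture~\ref{Radonmain}, which the paper does \emph{not} prove in full generality: it establishes only the endpoint cases $k=1$ (Theorem~\ref{newmain11}) and $k=d-1$ (Theorem~\ref{newradon}), and explicitly leaves the intermediate $k$ open (see the remark following the proof of Theorem~\ref{newmain11}).

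For $k=1$ your outline is exactly the paper's argument: dyadic decomposition of $f$, expansion of the $(d{+}1)$-st power, stratification by the affine-span dimension $s\in\{0,1\}$ of the tuple, and a direct bound on $|\Delta(s,i_0,\dots,i_d)|$. The reason this closes is that for lines there are only two strata, and the elementary count $|\Delta(1,i_0,\dots,i_d)|\le |E_{i_0}|\,|E_{i_l}|\,q^{d-l}$ (with $l$ the first index producing a new point) is available by hand. For $k=d-1$ the paper does \emph{not} follow your combinatorial scheme: it splits hyperplanes into those through the origin and those not, expresses $T_{\Pi_{d-1}}$ via the additive character $\chi$, and interpolates an $L^\infty$ bound with an $L^2$ bound coming from orthogonality. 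Your plan does not anticipate this Fourier route.

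For general $k$ your proposal has a genuine gap. The induction on $k$ you sketch can at best handle the strata $j<k$, since those reduce to the multilinear form of $T_{\Pi_j}$; but the top stratum $j=k$ carries weight $\binom{d-k}{0}_q=1$ and is precisely the count of $(d{+}1)$-tuples spanning a full $k$-plane, which cannot be pushed down to any lower $T_{\Pi_j}$. Bounding $|\Delta(k,i_0,\dots,i_d)|$ sharply is exactly the difficulty the paper flags as unresolved. Your last paragraph concedes this: the ``delicate heart'' you defer to M\"obius inversion and a Schur test is not bookkeeping but the entire content of the conjecture for $2\le k\le d-2$. As it stands, the proposal is a reformulation of the problem rather than a proof.
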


\subsection{Statement of main results}
In this paper we prove that Conjecture \ref{Radonmain} is true for the $X$-ray transform and the Radon transform.  More precisely, we obtain the following theorem.
\begin{theorem}\label{newmain1} Let $d\geq 2$ be any integer. If $k=1$ or $k=d-1,$ then  
$$ \|T_{\Pi_k}f\|_{L^{d+1}( \Pi_k, d\sigma)} \leq C \|f\|_{L^{\frac{d+1}{k+1}}(\mathbb F_q^d,dx)} \quad\mbox{for all}~~f ~~\mbox{on}~~\mathbb F_q^d,$$
where $C$ is independent of $|\mathbb F_q|.$ \end{theorem}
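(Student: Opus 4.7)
The plan is to strengthen the restricted-type inequality (\ref{9}) of Theorem \ref{Car} to the desired strong-type bound at the critical corner by a direct multilinear expansion that exploits the simple incidence geometry available in the cases $k = 1$ (lines) and $k = d-1$ (hyperplanes). By positivity of the averaging operator, I may and will assume throughout that $f \geq 0$.

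Expanding the $(d+1)$-st power of the norm yields the identity
$$
\|T_{\Pi_k} f\|_{L^{d+1}(\Pi_k, d\sigma)}^{d+1}
\;=\; \frac{1}{|\Pi_k|\, q^{k(d+1)}} \sum_{(x_1, \dots, x_{d+1}) \in (\mathbb F_q^d)^{d+1}} f(x_1)\cdots f(x_{d+1}) \, N_k(x_1, \dots, x_{d+1}),
$$
where $N_k(x_1, \dots, x_{d+1}) := |\{w \in \Pi_k : x_1, \dots, x_{d+1} \in w\}|$. I would then stratify this $(d+1)$-fold sum by the affine configuration of the tuple. For $k = d-1$, the weight $N_{d-1}$ is comparable to $q^{d-1-j}$, where $j$ is the affine dimension of the span of $\{x_1, \dots, x_{d+1}\}$, and vanishes for $j = d$; for $k = 1$, the weight $N_1$ vanishes unless all the $x_i$ lie on a common line. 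On each stratum I would apply H\"older's inequality with exponents tuned to $(d+1)/(k+1)$ and use the explicit size $q^d$ of the ambient space to absorb the combinatorial weight $N_k$. The fully diagonal stratum (all $x_i$ equal) reduces to the trivial $L^{d+1}\to L^{d+1}$ inequality via the nesting of $\ell^p$-norms, while the intermediate strata can either be bounded directly by H\"older or, if necessary, by invoking (\ref{9}) on dyadic level sets of $f$ and summing without logarithmic loss.

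The main obstacle I anticipate lies in the exponent bookkeeping on the mixed strata, where some but not all of the $x_i$ coincide or span a proper affine subspace: the weight $N_k$ is then large and must be absorbed \emph{exactly} by the tuned H\"older exponents, with no residual power of $q$ surviving. This balance is precisely what distinguishes $k \in \{1, d-1\}$ from the intermediate range: in the Radon case, because hyperplanes are codimension one and the number of $j$-flats in $\mathbb F_q^d$ matches the scaling of $N_{d-1}$; in the X-ray case, because the collinearity constraint collapses the sum to a one-dimensional average along a single line. For $1 < k < d-1$ this matching breaks down, which is consistent with Conjecture \ref{Radonmain} remaining open in the intermediate range.
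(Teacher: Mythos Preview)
Your plan for $k=1$ --- expand $\|T_{\Pi_1}f\|_{d+1}^{d+1}$ multilinearly and stratify by whether the tuple spans a point or a line --- is exactly the paper's route, and your treatment of the diagonal stratum via $\ell^p$-nesting is correct. The gap is on the collinear stratum $s=1$. ``Direct H\"older'' there is circular: the sum of $f(x_0)\cdots f(x_d)$ over collinear tuples is, up to the diagonal correction, $\sum_{\ell\in\Pi_1}\bigl(\sum_{x\in\ell}f(x)\bigr)^{d+1}$, which is the quantity you are trying to bound. So the whole content sits in what you call a fallback, the dyadic decomposition $f=\sum_i 2^{-i}1_{E_i}$. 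But you cannot simply ``invoke (\ref{9}) on level sets and sum'': after the triangle inequality, restricted type gives $\sum_i 2^{-i}|E_i|^{2/(d+1)}$, which dominates rather than is dominated by $\bigl(\sum_i 2^{-i(d+1)/2}|E_i|\bigr)^{2/(d+1)}$. The paper instead keeps the multilinear form, orders $i_0\le\cdots\le i_d$, and bounds $|\Delta(1,i_0,\dots,i_d)|$ by partitioning according to the first index $l$ at which the tuple leaves the diagonal; the resulting estimate $|E_{i_0}||E_{i_l}|q^{d-l}$, after trading $q^{1-l}$ against $|E_{i_l}|^{-(l-1)/d}$, makes every inner sum a convergent geometric series. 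Your proposal does not supply this mechanism, and without it the summation carries a loss.

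For $k=d-1$ your plan diverges from the paper. You propose the same combinatorial stratification over the affine span of the tuple, whereas the paper abandons combinatorics and uses Fourier analysis: subtract the constant mode $T^\star f=q^{-d}\sum_x f(x)$, then bound the oscillatory remainder $T^{\star\star}$ in $L^2$ via character orthogonality, obtaining $\|T^{\star\star}1_E\|_2\lesssim q^{-d+1/2}|E|^{1/2}$. Interpolating with the trivial $L^\infty$ bound yields $\|T^{\star\star}1_E\|_{(d+1)/2}\lesssim q^{-(d^2+1)/(d+1)}|E|^{(d-1)/(d+1)}$, and it is precisely the exponent $\tfrac{d-1}{d+1}<1$ on $|E|$ that makes the dyadic sum over level sets converge geometrically. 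Your combinatorial route may be viable, but it requires controlling $|\Delta(j,i_0,\dots,i_d)|$ for all $j\le d-1$, which you have not addressed; the paper itself notes (Remark after Theorem~\ref{newmain11}) that this estimate ``may not be simple'' beyond $k=1$.
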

In order to prove Theorem \ref{newmain1} for the $X$-ray transform ($k=1$),  we shall adapt both the combinatorial arguments  in \cite{CSW08} and the skills in \cite{LL10} for endpoint estimates. On the other hand,   a Fourier analytic argument will be required to prove  Theorem \ref{newmain1} for the Radon transform ($k=d-1$) .
\begin{remark} After writing this paper, the author realized that  our result for the $X$-ray transform is a corollary of Theorem 1.1 in the paper \cite {EOT}.
This was pointed out by R. Oberlin. 
\end{remark}

\section{Proof of the mapping properties of the $X$-ray transform}
In this section, we restate and  prove Theorem \ref{newmain1} in the case of the  $X$-ray transform.
Namely, we prove the following statement which implies the sharp boundedness of the $X$-ray transform.
\begin{theorem}\label{newmain11} Let $d\geq 2$ be any integer.  
$$ \|T_{\Pi_1}f\|_{L^{d+1}( \Pi_1, d\sigma)} \leq C \|f\|_{L^{\frac{d+1}{2}}(\mathbb F_q^d,dx)} \quad\mbox{for all}~~f ~~\mbox{on}~~\mathbb F_q^d,$$
where $C$ is independent of $|\mathbb F_q|.$ \end{theorem}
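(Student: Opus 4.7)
The plan is to upgrade the restricted strong-type inequality (\ref{9}) from Theorem \ref{Car} (specialized to $k=1$) to the strong-type inequality in Theorem \ref{newmain11}. The gap between these two statements is exactly the difference between the Lorentz norm $L^{(d+1)/2,1}$ and the Lebesgue norm $L^{(d+1)/2}$, a classical endpoint issue. Following the author's suggestion, I would combine the combinatorial incidence arguments of \cite{CSW08} with the endpoint techniques of \cite{LL10}.

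First I would decompose a non-negative $f$ into its dyadic level sets:
$$ f=\sum_j 2^j \chi_{E_j},\qquad E_j=\{x\in \mathbb F_q^d : 2^{j-1}<f(x)\leq 2^j\}, $$
so the $E_j$ are pairwise disjoint and $\|f\|_{L^{(d+1)/2}}^{(d+1)/2}\sim q^{-d}\sum_j 2^{j(d+1)/2}|E_j|$. Applying (\ref{9}) to each $\chi_{E_j}$ provides the single-scale bound
$$ \|T_{\Pi_1}(2^j\chi_{E_j})\|_{L^{d+1}(\Pi_1,d\sigma)} \leq C\,2^j\,(|E_j|/q^d)^{2/(d+1)}. $$
A naive triangle inequality at this stage would only reproduce the restricted $L^{(d+1)/2,1}$ estimate, so the key task is to improve the summation over scales.

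For the improvement, I would expand
$$ \|T_{\Pi_1}f\|_{L^{d+1}}^{d+1} = \frac{1}{q^{d+1}|\Pi_1|}\sum_{\ell\in\Pi_1}\Bigl(\sum_{x\in \ell}f(x)\Bigr)^{d+1}, $$
rewrite the inner power as a sum over $(d+1)$-tuples of points weighted by the number of lines through them (which is $\sim q^{d-1}$ on fully collapsed tuples and $\leq 1$ on tuples containing at least two distinct collinear points), and then split $f=\sum_j f_j$ inside each factor. Sorting lines by their ``dominant'' level, namely an index $j(\ell)$ maximizing $2^j|\ell\cap E_j|$, the goal is to prove an almost-orthogonal estimate of the form
$$ \|T_{\Pi_1}f\|_{L^{d+1}(\Pi_1,d\sigma)}^{d+1}\lesssim \sum_j \|T_{\Pi_1}(2^j\chi_{E_j})\|_{L^{d+1}(\Pi_1,d\sigma)}^{d+1}. $$
Combined with the single-scale bound and the trivial inequality $\sum_j 2^{j(d+1)}|E_j|^2\leq \bigl(\sum_j 2^{j(d+1)/2}|E_j|\bigr)^2$, this recovers $\|f\|_{L^{(d+1)/2}}^{d+1}$ on the right.

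The main obstacle is this almost-orthogonality step: since $d+1\geq 3$, the functions $T_{\Pi_1}f_j$ are not literally orthogonal and can share support on many lines, so the cross-scale interactions must be quantitatively absorbed. The CSW incidence estimate has to be applied in a multi-level form, showing that the collection of lines simultaneously heavy for several levels is small enough to be dominated by the dominant-scale contribution. Avoiding any logarithmic loss at this step is the technical heart of the argument, and is precisely where the endpoint method of \cite{LL10} (pigeonholing on dominant scales combined with refined incidence counts) must be carefully adapted to the X-ray setting.
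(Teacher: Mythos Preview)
Your setup---dyadic level-set decomposition, expansion of the $(d+1)$-st power as a sum over $(d+1)$-tuples, and the observation that only tuples spanning a $0$- or $1$-plane contribute---matches the paper exactly. Where you diverge is in handling the cross-scale terms. You aim for an almost-orthogonality statement
\[
\|T_{\Pi_1}f\|_{L^{d+1}}^{d+1}\ \lesssim\ \sum_j \|T_{\Pi_1}(2^{j}\chi_{E_j})\|_{L^{d+1}}^{d+1}
\]
and propose to reach it by pigeonholing each line on its dominant level. As you yourself anticipate, this is precisely where a logarithmic loss appears: a fixed line can receive comparable contributions from many scales, so $(T_{\Pi_1}f(\ell))^{d+1}$ is not controlled by the single dominant term without a factor counting the active scales. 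The paper neither proves nor needs this almost-orthogonality, and it is not clear that it holds in the form you state.

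Instead, after symmetrizing so that $i_0\le i_1\le\cdots\le i_d$, the paper bounds the mixed terms directly. For collinear tuples ($s=1$) it decomposes according to the \emph{first} index $l\in\{1,\dots,d\}$ at which $[x_0,\dots,x_l]$ becomes one-dimensional, giving the elementary count
\[
|L(l)|\le |E_{i_0}|\,|E_{i_l}|\,q^{\,d-l}:
\]
$x_0$ is free in $E_{i_0}$; $x_1,\dots,x_{l-1}$ must equal $x_0$; $x_l$ is free in $E_{i_l}$ and fixes the line; each of $x_{l+1},\dots,x_d$ then has at most $q$ choices on that line. Inserting this into the multi-sum and using only $|E_{i_l}|\le 2^{(d+1)i_l/2}$ and $|E_{i_l}|\le q^d$, every inner sum over $i_d,i_{d-1},\dots,i_1$ is a genuinely convergent geometric series, collapsing to $\sum_{i_0}|E_{i_0}|2^{-(d+1)i_0/2}=1$ with no loss. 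The ordering $i_0\le\cdots\le i_d$ is what manufactures the geometric decay your dominant-scale argument was seeking; once you have it, the ``multi-level CSW'' step you describe reduces to the one-line bound on $|L(l)|$ above, and no separate almost-orthogonality lemma is required.
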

\begin{proof}
 We begin by following the argument in \cite{LL10}. Without loss of generality, we may assume that
$f$ is a non-negative real-valued function and 
\begin{equation}\label{easy1} \sum_{x\in \mathbb F_q^d} f(x)^{\frac{d+1}{2}}=1.\end{equation}
Thus, it is natural to assume that $\|f\|_\infty \leq 1.$ Furthermore, we may assume that $f$ is written by a step function
\begin{equation}\label{easy2} f(x)=\sum_{i=0}^\infty 2^{-i} E_i(x),\end{equation}
where the sets $E_i$ are disjoint subsets of $\mathbb F_q^d$, and  here, and throughout the paper, we write $E(x)$ for the characteristic function on a set $E\subset \mathbb F_q^d.$
From (\ref{easy1}) and (\ref{easy2}), we also assume that
\begin{equation} \label{easy3} \sum_{j=0}^\infty 2^{-\frac{(d+1)j}{2}}|E_j|=1\quad\mbox{and } ~~ |E_j|\leq 2^{\frac{(d+1)j}{2}}~~\mbox{for all}~~j=0,1,\cdots.\end{equation}
Since $dx$ is the normalized counting measure on $\mathbb F_q^d,$  the assumption (\ref{easy1}) shows that we only need to prove 
\begin{equation}\label{suff0} \|T_{\Pi_1}f\|^{d+1}_{L^{d+1}( \Pi_1, d\sigma)} \lesssim q^{-2d},\end{equation}
where $f$ satisfies  (\ref{easy2}) and (\ref{easy3}). Since we have assumed that $f\geq 0$, it is clear that $T_{\Pi_1}f$ is also a non-negative real-valued function on $\Pi_1.$ By expanding the left-hand side of (\ref{suff0}) and using the facts that $|w|=q$ for $w\in \Pi_1$ and $|\Pi_1|\sim q^{2(d-1)}$, we see that
$$ \|T_{\Pi_1}f\|^{d+1}_{L^{d+1}( \Pi_1, d\sigma)} = \frac{1}{|\Pi_1|} \sum_{w\in \Pi_1} \left(T_{\Pi_1}f(w)\right)^{d+1} $$
$$\sim \frac{1}{q^{d+1}} \frac{1}{q^{2(d-1)}} \sum_{i_0=0}^\infty  \dots \sum_{i_d=0}^\infty 2^{-(i_0+\dots +i_{d})} \sum_{(x_0,\dots,x_{d})\in E_{i_{0}}\times \dots \times E_{i_{d}}} \sum_{w\in \Pi_1} w(x_0)\dots w(x_{d})$$
$$\sim \frac{1}{q^{d+1}} \frac{1}{q^{2(d-1)}} \sum_{0=i_0\leq i_1\leq \dots\leq i_{d}<\infty}   2^{-(i_0+\dots +i_{d})} \sum_{(x_0,\dots,x_{d})\in E_{i_{0}}\times \dots \times E_{i_{d}}} \sum_{w\in \Pi_1} w(x_0)\dots w(x_{d}),$$
where the last line follows from  the symmetry of $i_0, \cdots,i_{d}.$ We now follows the argument in \cite{CSW08}.
Notice that we can write
$$\sum_{(x_0,\dots,x_{d})\in E_{i_{0}}\times \dots \times E_{i_{d}}}=\sum_{s=0}^\infty \sum_{(x_0,\dots,x_{d})\in \Delta(s,i_0,\dots,i_{d})},$$
where $\Delta(s, i_0,\dots,i_{d})=\{(x_0,\dots,x_{d})\in E_{i_{0}}\times \dots \times E_{i_{d}}: [x_0,\dots,x_{d}]~\mbox{is a}~s\mbox{-plane}\}$ and $[x_0,\dots,x_{d}]$ denotes the smallest affine subspace containing the elements $x_0,\dots,x_{d}.$  In addition, observe that  if $ s>1$ and  $(x_0,\dots,x_{d}) \in \Delta(s,i_0,\dots,i_{d}),$  then the sum over $w\in \Pi_1$ vanishes. On the other hand, if $s=0,1,$ then the sum over $w\in \Pi_1$
is same as the number of lines containing the unique $s$-plane, that is $\sim q^{(d-1)(1-s)}.$ From these observations and (\ref{suff0}),  it is enough to prove that for all $E_i, i=0,1,\dots,$ satisfying  (\ref{easy3}), 
\begin{equation}\label{suffmain} \sum_{i_0=0}^\infty \sum_{i_1\geq i_0}^\infty \cdots \sum_{i_{d}\geq i_{d-1}}^\infty 2^{-(i_0+i_1+\cdots +i_{d})} \sum_{s=0}^1 |\Delta(s,i_0,\dots,i_{d})| q^{-s(d-1)} \lesssim 1.
\end{equation}

Namely, it suffices to prove that for every $d\geq 2$ and $s=0,1$,
 $$\mbox{A}=\sum_{i_0=0}^\infty \sum_{i_1\geq i_0}^\infty \cdots \sum_{i_{d}\geq i_{d-1}}^\infty 2^{-(i_0+i_1+\cdots +i_{d})}  |\Delta(s,i_0,\dots,i_{d})| q^{-s(d-1)} \lesssim 1,$$
 where $\Delta(s,i_0,\dots,i_{d})$ is defined as before, and the sets $E_i$, i=0,1,\dots,  satisfy  (\ref{easy3}). Suppose that $s=0.$ Since $|\Delta(0,i_0,\dots,i_{d})|\leq |E_{i_0}|$, it follows
 $$ \mbox{A}\leq \sum_{i_0=0}^\infty \sum_{i_1\geq i_0}^\infty \cdots \sum_{i_{d}\geq i_{d-1}}^\infty 2^{-(i_0+i_1+\cdots +i_{d})} |E_{i_0}|. $$
 Since the sum of a convergent geometric series is similar to the value of the first term,  we have the desirable conclusion for $s=0$:  
 $$\mbox{A}\lesssim\sum_{i_0=0}^\infty |E_{i_0}| 2^{-(d+1)i_0} \leq \sum_{i_0=0}^\infty |E_{i_0}| 2^{-\frac{(d+1)i_0}{2}} =1,$$
 where the last equality is obtained from (\ref{easy3}).
 
 Next, we assume that $s=1.$ We must show that for all $E_i, i=0,1,\dots$,  satisfying  (\ref{easy3}), we have
 \begin{equation}\label{aim1}\sum_{i_0=0}^\infty \sum_{i_1\geq i_0}^\infty \cdots \sum_{i_{d}\geq i_{d-1}}^\infty 2^{-(i_0+i_1+\cdots +i_{d})}  |\Delta(1,i_0,\dots,i_{d})| q^{-(d-1)} \lesssim 1.\end{equation}
 We estimate the upper bound of $|\Delta(1,i_0,\dots,i_{d})|.$ Fix $x_{i_0} \in E_{i_0}$ which has $|E_{i_0}|$ choices.
 Notice that if $(x_{i_0}, \dots, x_{i_d}) \in \Delta(1,i_0,\dots,i_{d}),$ then all points $x_{i_0}, \dots, x_{i_{d}}$ must lie on a line, which is determined by at least two different points of them . Therefore, for each $l=1,2, \dots,d,$, we can define 
 $$ L(l)=\{(x_{i_0}, \dots, x_{i_{d}}) \in \Delta(1,i_0, \dots, i_{d}): [x_{i_0},\dots, x_{i_l}]~\mbox{is a line, and}~[x_{i_0},\dots, x_{i_{l-1}}]~\mbox{is a point}\},$$
 where we recall that $[\alpha_1,\dots, \alpha_s]$ means the smallest affine subspace containing all points $\alpha_1,\dots, \alpha_s$ in $\mathbb F_q^d.$  It is clear that $\Delta(1,i_0,\dots,i_{d})=\cup_{l=1}^d L(l),$ which implies that
 \begin{equation}\label{aim2} |\Delta(1,i_0,\dots,i_{d})|\leq \sum_{l=1}^d |L(l)|.\end{equation}
 By the definition of $L(l), l=1,\dots,d,$ it follows that for every $l=1,\dots,d,$
 \begin{equation}\label{aim3}|L(l)|\leq |E_{i_0}||E_{i_{l}}|q^{d-l}.\end{equation} 
 To see this, first fix $x_{i_0}\in E_{i_0}$ which has $|E_{i_0}|$ choices. For each fixed $x_{i_0}\in E_{i_0},$ if $(x_{i_0},\dots,x_{d})\in L(l)$, then all points $x_{i_1},\dots, x_{i_{l-1}}$ are automatically chosen as $x_{i_0},$ and there are at most $|E_{i_{l}}|$ choices for $x_{i_{l}}\in E_{i_l}.$ Since $x_0$ and $x_{i_{l}}$ determine a fixed line, all points $x_{i_{l+1}},\dots, x_{i_{d}}$ must lie on the line. Thus, there are at most $q$ choices for each $x_{i_{l+1}},\dots, x_{i_{d}}$, because a line contains exactly $q$ points in $\mathbb F_q^d.$

From (\ref{aim1}), (\ref{aim2}), and (\ref{aim3}), it suffices to prove that for every $l=1,\dots,d,$
$$\mbox{B}=\sum_{i_0=0}^\infty \sum_{i_1\geq i_0}^\infty \cdots \sum_{i_{d}\geq i_{d-1}}^\infty 2^{-(i_0+i_1+\cdots +i_{d})} |E_{i_0}||E_{i_{l}}|q^{1-l} \lesssim 1.$$
Since $|E_{i_{l}}|\leq q^d$ and $l\geq 1,$ it is easy to see that $|E_{i_{l}}|^{(l-1)/d} q^{1-l} \lesssim 1.$ Therefore, it follows that
$$ \mbox{B}\lesssim \sum_{i_0=0}^\infty \sum_{i_1\geq i_0}^\infty \cdots \sum_{i_{d}\geq i_{d-1}}^\infty 2^{-(i_0+i_1+\cdots +i_{d})}|E_{i_0}| |E_{i_{l}}|^{\frac{d+1-l}{d}}.$$
Since $\frac{d+1-l}{d}>0$, applying (\ref{easy3})  gives
$$ \mbox{B}\lesssim \sum_{i_{0}=0}^\infty \sum_{i_{1}\geq i_{0}}^\infty \cdots \sum_{i_{d}\geq i_{d-1}}^\infty 2^{-(i_{0}+i_{1}+\cdots +i_{d})}|E_{i_{0}}| 2^{\frac{(d+1-l)(d+1)i_{l}}{2d}}.$$
Compute  the inner summations by checking  that each of them is a convergent geometric series. It follows that
$$ \mbox{B}\lesssim \sum_{i_{0}=0}^\infty |E_{i_0}| 2^{\frac{(-d^2-dl-l+1)i_{0}}{2d}} \leq  \sum_{i_{0}=0}^\infty |E_{i_{0}}| 2^{-\frac{(d+1)i_0}{2}}=1,$$
where the last equality follows from (\ref{easy3}).  Thus, we complete the proof of Theorem \ref{newmain11}.
\end{proof}
\begin{remark} It seems that the similar arguments as above  work for settling Conjecture \ref{Radonmain}, but   it may not be simple to estimate  $|\Delta(s,i_0,\dots,i_{d})|.$ 
\end{remark}
\section{Proof of mapping properties of the Radon transform}
In this section, we prove Theorem \ref{newmain1} in the case of the Radon transform. Namely, we shall prove the following.  

\begin{theorem}\label{newradon} Let $d\geq 2$ be any integer. Then,  
$$ \|T_{\Pi_{d-1}}f\|_{L^{d+1}( \Pi_{d-1}, d\sigma)} \leq C \|f\|_{L^{\frac{d+1}{d}}(\mathbb F_q^d,dx)} \quad\mbox{for all}~~f ~~\mbox{on}~~\mathbb F_q^d,$$
where $C$ is independent of $|\mathbb F_q|.$ \end{theorem}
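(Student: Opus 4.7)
The plan is to exploit the Fourier expansion of the Radon transform. Parametrizing a hyperplane by $(m,t)\in (\mathbb F_q^d\setminus\{0\})\times\mathbb F_q$ modulo scaling, and using a nontrivial additive character $\chi$ of $\mathbb F_q$ to write $\mathbf{1}[m\cdot x=t]=q^{-1}\sum_s \chi(s(m\cdot x-t))$, one obtains
$$T_{\Pi_{d-1}}f(m,t) \;=\; q^{-d}\hat f(0) \,+\, E(m,t), \qquad E(m,t)\,:=\,q^{-d}\sum_{s\neq 0}\chi(st)\,\hat f(sm),$$
where $\hat f(\xi)=\sum_x f(x)\chi(-\xi\cdot x)$ is the Fourier transform with respect to counting measure. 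The main term $q^{-d}\hat f(0)=\|f\|_{L^1(dx)}$ is a constant function on $\Pi_{d-1}$, so its $L^{d+1}(d\sigma)$-norm is bounded by $\|f\|_{L^{(d+1)/d}(dx)}$ via H\"older, since $dx$ is a probability measure. The whole problem therefore reduces to proving $\|E\|_{L^{d+1}(\Pi_{d-1},d\sigma)}\lesssim\|f\|_{L^{(d+1)/d}(\mathbb F_q^d,dx)}$.

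The key observation is that for each fixed $m\neq 0$, the function $t\mapsto E(m,t)$ is the inverse Fourier transform on $\mathbb F_q$ of the one-variable sequence $s\mapsto q^{-d+1}\hat f(sm)\mathbf{1}[s\neq 0]$, which is nothing but a scalar multiple of the restriction of $\hat f$ to the punctured line $\langle m\rangle\setminus\{0\}$. Since $(d+1)/d\leq 2$, the one-dimensional Hausdorff--Young inequality on $\mathbb F_q$ yields, after raising to the $(d+1)$-st power,
$$\sum_{t\in\mathbb F_q}|E(m,t)|^{d+1} \;\lesssim\; q^{1-d-d^2}\,\Bigl(\sum_{s\neq 0}|\hat f(sm)|^{(d+1)/d}\Bigr)^d.$$
Summing over $m\neq 0$ and applying the power-mean inequality $(\sum_{s\neq 0} a_s)^d\leq (q-1)^{d-1}\sum_{s\neq 0} a_s^d$ with $a_s=|\hat f(sm)|^{(d+1)/d}$ reduces the right-hand side to a constant times $\sum_{m,s\neq 0}|\hat f(sm)|^{d+1}$. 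Because $(m,s)\mapsto sm$ covers $\mathbb F_q^d\setminus\{0\}$ exactly $(q-1)$ times, this collapses to $(q-1)\sum_{\xi\neq 0}|\hat f(\xi)|^{d+1}$. A final application of Hausdorff--Young, this time on $\mathbb F_q^d$, bounds that last sum by $q^d\bigl(\sum_x f(x)^{(d+1)/d}\bigr)^d$; collecting all the powers of $q$ together with $|\Pi_{d-1}|\sim q^d$ and the $(q-1)$-fold overcounting of the parametrization yields exactly $\|E\|_{L^{d+1}(d\sigma)}^{d+1}\lesssim \|f\|_{L^{(d+1)/d}(dx)}^{d+1}$.

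Conceptually this argument is cleaner than the $X$-ray case: no dyadic decomposition of $f$ into level sets and no combinatorial estimate on sets of the type $\Delta(s,i_0,\ldots,i_d)$ are required, and the matching of exponents at the endpoint $(p,r)=((d+1)/d,d+1)$ is exactly what allows the two Hausdorff--Young applications to chain together without loss. The main obstacle is not really conceptual but careful bookkeeping: making sure every power of $q$ lands correctly through the two Hausdorff--Young steps, the power-mean inequality, and the normalization factors. One subtlety worth pinning down is the correct direction of the one-dimensional Hausdorff--Young---one is recovering $E(m,\cdot)$ from its Fourier coefficients rather than transforming it---together with the Plancherel factor $q^{-d/(d+1)}$ on $\mathbb F_q$ that this dictates.
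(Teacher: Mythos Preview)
Your argument is correct and is genuinely different from the paper's. Both proofs begin with the same Fourier decomposition of $T_{\Pi_{d-1}}f$ into a constant main term $q^{-d}\hat f(0)$ plus an oscillatory error; from that point on the two diverge. The paper follows the Lewko--Lewko template: it decomposes $f=\sum_i 2^{-i}E_i$ into level sets, proves $L^\infty$ and $L^2$ bounds for the error acting on characteristic functions (the $L^2$ bound via orthogonality, after splitting $\Pi_{d-1}$ into hyperplanes through and not through the origin), interpolates to an $L^{(d+1)/2}$ estimate, and then sums the dyadic pieces via H\"older. Your route is more direct: a one-dimensional Hausdorff--Young in the $t$ variable converts $\|E(m,\cdot)\|_{d+1}$ into an $\ell^{(d+1)/d}$ norm of $\hat f$ along the line $\langle m\rangle$, the power-mean inequality turns this into $\ell^{d+1}$, and a second Hausdorff--Young on $\mathbb F_q^d$ closes the loop. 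The exponents match exactly at the endpoint, so no interpolation or level-set decomposition is needed at all.

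What your approach buys is brevity and uniformity: it handles all $d\geq 2$ in one stroke, whereas the paper's interpolation step requires $(d+1)/2\geq 2$ and hence treats $d=2$ separately by appealing to the $X$-ray theorem. What the paper's approach buys is a template that parallels the $X$-ray argument and is in principle adaptable to intermediate $k$, since it reduces matters to restricted-type bounds on sets; your two-step Hausdorff--Young trick, by contrast, is special to the Radon case $k=d-1$, where each $k$-plane is the preimage of a single value under a linear functional and the problem factors through a one-dimensional Fourier transform.
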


\begin{proof}First, notice that if the dimension $d$ is two, then the statement of Theorem \ref{newradon} follows immediately from Theorem \ref{newmain11}.
 We therefore assume that $d\geq 3.$  As before, we may assume that $f$ is a non-negative real function and 
\begin{equation}\label{simple1}\sum_{x\in \mathbb F_q^d} [f(x)]^{(d+1)/d}=1.\end{equation}
Moreover, we may assume that the function $f$ is a step function: 
\begin{equation}\label{simple2} f(x)=\sum_{i=0}^{\infty}  2^{-i} E_i(x),\end{equation}
where  the sets $E_i$ are disjoint subsets of $\mathbb F_q^d.$ Notice that (\ref{simple1}) and (\ref{simple2}) imply that 
\begin{equation}\label{simple3} 
\sum_{j=0}^{\infty} 2^{-\frac{(d+1)j}{d}}|E_j|=1 \quad \mbox{and }~~ |E_j|\leq 2^{\frac{(d+1)j}{d}}  ~~\mbox{for all}~~j=0,1,\dots.
\end{equation}
We write $\Pi_{d-1}= H\cup \Theta$ where $H$ and $\Theta$ are defined by
$$ H:=\{w\in \Pi_{d-1}: (0,\dots,0)\notin w\}$$
and 
$$ \Theta:=\{w\in \Pi_{d-1}: (0,\dots,0)\in w\}.$$ It is clear that $H$ and $\Theta$ are disjoint.
Notice that we can identify  $H$ with $ \mathbb F_q^d\setminus\{(0,\dots,0)\}$ in the sense that if $w\in H$, then there exists a unique $w^\prime \in \mathbb F_q^d\setminus\{(0,\dots,0)\}$ such that 
$$ w=\{x\in \mathbb F_q^d: w^\prime\cdot x=1\}.$$
Thus, if $w\in H$, then we may assume that
$$T_{\Pi_{d-1}} f(w)= \frac{1}{|w|} \sum_{x\in \mathbb F_q^d: w^\prime\cdot x=1} f(x) =\frac{1}{q^{d-1}} \sum_{x\in \mathbb F_q^d: w^\prime\cdot x=1} f(x).$$
On the other hand, for a fixed $w\in \Theta,$ there is a unique line passing through the origin, say $L_w$, such that 
$$w=\{x\in \mathbb F_q^d: w^\prime\cdot x=0 ~\mbox{for all}~w^\prime\in L_w\setminus \{(0,\dots,0\}\}.$$
By selecting one specific $w^\prime\in L_w\setminus \{(0,\dots,0)\}$ we can identify $w\in \Theta$ with the specific point $w^\prime\in  L_w\setminus \{(0,\dots,0)\}.$ Throughout the paper, we denote by $S$ the collection of the specific points each of which is chosen from $L_w\setminus \{(0,\dots,0)\}$ for every $w\in \Theta.$ \footnote{In the Euclidean setting, one can consider the set $S$ as a half part of the unit sphere. However, it is not true in general in the finite field setting. For example, if the dimension $d$ is four and $-1\in \mathbb F_q$ is a square number, then the line $l=\{t(i,1, i, 1): t\in \mathbb F_q\}$ does not intersect  the set $\{x\in \mathbb F_q^4: x_1^2+\cdots+x_4^2=1\}.$}
Thus, we also assume that  if $w\in \Theta,$ then
$$ T_{\Pi_{d-1}}f(w)=\frac{1}{q^{d-1}}\sum_{x\in \mathbb F_q^d: w^\prime\cdot x=0}f(x),$$
where $w^\prime \in S.$
Since $\Pi_{d-1}=H\cup \Theta$ and $H\cap \Theta =\emptyset,$ the Radon transform $T_{\Pi_{d-1}}$ can be viewed as
$$ T_{\Pi_{d-1}}f(w)= T_0f(w) +T_1f(w) \quad\mbox{for}~~w\in \Pi_{d-1},$$
where the operators $T_0$ and $T_1$ are defined as
$$ T_{0}f(w)= \frac{\Theta(w)}{q^{d-1}}  \sum_{x\in \mathbb F_q^d:w^\prime\cdot x=0}f(x)$$
and
$$ T_{1}f(w)=\frac{H(w)}{q^{d-1}}  \sum_{x\in \mathbb F_q^d:w^\prime\cdot x=1}f(x).$$
In order to prove Theorem \ref{newradon}, it therefore suffices to show that the following two inequalities hold:
\begin{equation}\label{goal1}
\|T_{0}f\|_{L^{d+1}( \Pi_{d-1}, d\sigma)} \lesssim \|f\|_{L^{\frac{d+1}{d}}(\mathbb F_q^d,dx)} ,
\end{equation}
and 
\begin{equation}\label{goal2}\|T_{1}f\|_{L^{d+1}( \Pi_{d-1}, d\sigma)} \lesssim \|f\|_{L^{\frac{d+1}{d}}(\mathbb F_q^d,dx)} ,
\end{equation}
where the functions $f$ satisfy  (\ref{simple1}), (\ref{simple2}), and  (\ref{simple3}).

\subsection{Proof of the inequality (\ref{goal1})} Let us denote by $\chi$ the canonical additive character of $\mathbb F_q$ (see \cite{LN97} or \cite{IK04}).
Recall that  the orthogonality relation of $\chi$ holds:  
$$\sum_{s\in \mathbb F_q} \chi{(as)}=\left\{\begin{array}{ll} 0 \quad &\mbox{if} ~~a\in \mathbb F_q^*=\mathbb F_q\setminus \{0\}\\
                                                   q\quad & \mbox{if} ~~ a=0. \end{array}\right.$$
                                                                                           
Using the orthogonality relation of $\chi$, we have 
$$ T_0f(w)= \frac{\Theta(w)}{q^{d-1}}  \sum_{x\in \mathbb F_q^d}\left[ \frac{1}{q} \sum_{s\in \mathbb F_q} \chi(s(w^\prime\cdot x))\right] f(x)$$
$$=\frac{\Theta(w)}{q^{d}}  \sum_{x\in \mathbb F_q^d}\sum_{s=0} \chi(s(w^\prime\cdot x))f(x) + \frac{\Theta(w)}{q^{d}}  \sum_{x\in \mathbb F_q^d}\sum_{s\in \mathbb F_q^*} \chi(s(w^\prime\cdot x))f(x)$$
$$= \frac{\Theta(w)}{q^{d}}  \sum_{x\in \mathbb F_q^d}f(x) + \frac{\Theta(w)}{q^{d}}  \sum_{x\in \mathbb F_q^d}\sum_{s\in \mathbb F_q^*} \chi(s(w^\prime\cdot x))f(x)$$
$$:= T_0^\star f(w) + T_0^{\star\star}f(w).$$
Since $|T_0^{\star} f(w)|\leq \|f\|_{L^1(\mathbb F_q^d, dx)}$ for all $w\in \Pi_{d-1},$ we see that
$$ \|T_0^{\star}f \|_{L^{d+1}(\Pi_{d-1}, d\sigma)} \leq  \|f\|_{L^1(\mathbb F_q^d, dx)} \leq \|f\|_{L^{\frac{d+1}{d}}(\mathbb F_q^d,dx)},$$
where we used the facts that $dx$ and $d\sigma$ are the normalized counting measure on $\mathbb F_q^d$ and the normalized surface measure on $\Pi_{d-1}$ respectively. To prove the inequality (\ref{goal1}), it remains to prove that for all functions $f$ satisfying  (\ref{simple1}), (\ref{simple2}), and  (\ref{simple3}), 
\begin{equation}\label{remain1}
\|T_{0}^{\star\star}f\|_{L^{d+1}( \Pi_{d-1}, d\sigma)} \lesssim \|f\|_{L^{\frac{d+1}{d}}(\mathbb F_q^d,dx)} = q^{-\frac{d^2}{d+1}},
\end{equation}
where the last equality follows from (\ref{simple1}).
We need the following lemma.
\begin{lemma}\label{reallem} Let $d\geq 3$. Then, for every subset $E\subset \mathbb F_q^d,$ we have
$$ \|T_{0}^{\star\star} E\|_{L^{\frac{d+1}{2}}(\Pi_{d-1}, d\sigma)} \lesssim q^{-\frac{(d^2+1)}{d+1}} |E|^{\frac{d-1}{d+1}}.$$
\end{lemma}
\begin{proof} Since $d\geq 3,$  we see that the statement of Lemma \ref{reallem} follows immediately by interpolating the following two estimates: for all indicator functions $E(x)$ on $\mathbb F_q^d,$ 
\begin{equation}\label{inter1}\|T_{0}^{\star\star} E\|_{L^{\infty}(\Pi_{d-1}, d\sigma)} \lesssim q^{-d+1} |E| \end{equation}
and 
\begin{equation}\label{inter2}\|T_{0}^{\star\star} E\|_{L^{2}(\Pi_{d-1}, d\sigma)} \lesssim q^{-d+\frac{1}{2}}|E|^{\frac{1}{2}}.\end{equation}
To obtain (\ref{inter1}), notice that 
$$\|T_{0}^{\star\star} E\|_{L^{\infty}(\Pi_{d-1}, d\sigma)} =\| T_0E -T_{0}^{\star} E\|_{L^{\infty}(\Pi_{d-1}, d\sigma)}$$
$$\leq \|T_{0} E\|_{L^{\infty}(\Pi_{d-1}, d\sigma)}+ \|T_{0}^{\star} E\|_{L^{\infty}(\Pi_{d-1}, d\sigma)} \leq  q^{-d+1}|E| +q^{-d}|E| \sim q^{-d+1}|E|.$$
It remains to prove that (\ref{inter2}) holds. It follows that for every set $E\subset \mathbb F_q^d,$
$$ \|T_{0}^{\star\star} E\|^2_{L^{2}(\Pi_{d-1}, d\sigma)}=\frac{1}{|\Pi_{d-1}|} \sum_{w\in \Pi_{d-1}} \left| \frac{\Theta(w)}{q^{d}}  \sum_{x\in \mathbb F_q^d}\sum_{s\in \mathbb F_q^*} \chi(s(w^\prime\cdot x))E(x) \right|^2$$
$$= \frac{1}{|\Pi_{d-1}|} \sum_{w^\prime\in S} \left| \frac{1}{q^{d}}  \sum_{x\in E}\sum_{s\in \mathbb F_q^*} \chi(s(w^\prime\cdot x)) \right|^2 :=\frac{1}{|\Pi_{d-1}|} \sum_{w^\prime\in S} \Gamma(w^\prime),$$
where we used the fact that $S$ can be identified with $\Theta.$
Using a change of variables, we see that for each $w^\prime\in S, ~ \Gamma(w^\prime)=\Gamma(tw^\prime)$ for all $t\in \mathbb F_q^*.$
By the definition of $S,$ it therefore follows that
$$\|T_{0}^{\star\star} E\|^2_{L^{2}(\Pi_{d-1}, d\sigma)}\leq   \frac{1}{|\Pi_{d-1}|} \frac{1}{q-1} \sum_{w^\prime\in \mathbb F_q^d} \left| \frac{1}{q^{d}}  \sum_{x\in E}\sum_{s\in \mathbb F_q^*} \chi(s(w^\prime\cdot x)) \right|^2. $$
Since $|\Pi_{d-1}|\sim q^d$, if we expand the square term and apply the orthogonality relation of $\chi$ to the sum over $w^\prime\in \mathbb F_q^d$, then we see that
$$\|T_{0}^{\star\star} E\|^2_{L^{2}(\Pi_{d-1}, d\sigma)}\lesssim \frac{1}{q^{3d+1}} \sum_{w^\prime\in \mathbb F_q^d} \sum_{x,x^\prime \in E} \sum_{s,s^\prime \in \mathbb  F_q^*} \chi(w^\prime\cdot (sx-s^\prime x^\prime))$$
$$=\frac{1}{q^{2d+1}} \sum_{x,x^\prime \in E, s,s^\prime \in \mathbb F_q^*: sx=s^\prime x^\prime} 1 \leq \frac{|E|}{q^{2d-1}}.$$
Thus, the proof of Lemma \ref{reallem} is complete.
\end{proof}

We now prove  (\ref{remain1}). Since we have assumed that $f$ is considered as a step function (\ref{simple2}), it follows that 
$$\|T_{0}^{\star\star}f\|^2_{L^{d+1}( \Pi_{d-1}, d\sigma)}=\|(T_{0}^{\star\star}f) (T_{0}^{\star\star}f)\|_{L^{\frac{d+1}{2}}( \Pi_{d-1}, d\sigma)} $$
$$\leq \sum_{i=0}^\infty \sum_{j=0}^\infty 2^{-i-j} \|(T_{0}^{\star\star}E_i) (T_{0}^{\star\star}E_j)\|_{L^{\frac{d+1}{2}}( \Pi_{d-1}, d\sigma)}$$
$$\sim \sum_{i=0}^\infty \sum_{j\geq i}^\infty 2^{-i-j} \|(T_{0}^{\star\star}E_i) (T_{0}^{\star\star}E_j)\|_{L^{\frac{d+1}{2}}( \Pi_{d-1}, d\sigma)},$$
where the last line follows from the symmetry of $i,j.$ By H\"{o}lder's inequality, the inequality (\ref{inter1}), and  Lemma \ref{reallem},  (\ref{remain1}) will follow if we prove that
$$ \sum_{i=0}^\infty \sum_{j\geq i}^\infty 2^{-i-j} |E_i||E_j|^{\frac{d-1}{d+1}}\lesssim 1.$$
This can be justified by making use of  (\ref{simple3}) and computing the summation  over $j$ variable:
$$\sum_{i=0}^\infty \sum_{j\geq i}^\infty 2^{-i-j} |E_i||E_j|^{\frac{d-1}{d+1}} \leq \sum_{i=0}^\infty \sum_{j\geq i}^\infty 2^{-i-j} |E_i| \left( 2^{\frac{(d+1)j}{d}} \right)^{\frac{d-1}{d+1}}\sim 
\sum_{i=0}^\infty |E_i|  2^{-\frac{(d+1)i}{d}}=1,$$
which completes the proof of  (\ref{goal1}).

\subsection{Proof of the inequality (\ref{goal2})}
By showing that the inequality (\ref{goal2}) holds, we shall complete the proof of Theorem \ref{newradon}.
We shall take the same steps as in the previous subsection.
From the orthogonality relation of $\chi,$  it follows that for all $w\in \Pi_{d-1},$
$$ T_1f(w)= \frac{H(w)}{q^d} \sum_{x\in \mathbb F_q^d} f(x) + \frac{H(w)}{q^d} \sum_{x\in \mathbb F_q^d} \sum_{s\in \mathbb F_q^*} \chi(s(w^\prime\cdot x-1))f(x) 
:= T_1^{\star}(w)+ T_1^{\star\star}(w).  $$
As before, it is easy to see that 
$$\|T_1^{\star}f \|_{L^{d+1}(\Pi_{d-1}, d\sigma)}  \leq \|f\|_{L^{\frac{d+1}{d}}(\mathbb F_q^d,dx)}.$$
Thus, it is enough to prove that  for all functions $f$ satisfying  (\ref{simple1}), (\ref{simple2}), and  (\ref{simple3}), 
$$
\|T_{1}^{\star\star}f\|_{L^{d+1}( \Pi_{d-1}, d\sigma)} \lesssim \|f\|_{L^{\frac{d+1}{d}}(\mathbb F_q^d,dx)} = q^{-\frac{d^2}{d+1}},
$$
where the last equality follows from (\ref{simple1}).
From the same arguments as in the proof of  (\ref{goal1}), our task is only to obtain Lemma \ref{reallem} for the operator $T_1^{\star\star}.$  As in the proof of Lemma \ref{reallem}, it suffices to prove the following two equalities: for every subset $E$ of $\mathbb F_q^d,$
\begin{equation}\label{inter11}\|T_{1}^{\star\star} E\|_{L^{\infty}(\Pi_{d-1}, d\sigma)} \lesssim q^{-d+1} |E| \end{equation}
and 
\begin{equation}\label{inter22}\|T_{1}^{\star\star} E\|_{L^{2}(\Pi_{d-1}, d\sigma)} \lesssim q^{-d+\frac{1}{2}}|E|^{\frac{1}{2}}.\end{equation}

The inequality (\ref{inter11}) follows immediately from the same argument as  before.
To prove  (\ref{inter22}), we observe that
$$ \|T_{1}^{\star\star} E\|^2_{L^{2}(\Pi_{d-1}, d\sigma)}= \frac{1}{|\Pi_{d-1}|} \sum_{w\in  \Pi_{d-1}} \left|\frac{H(w)}{q^d} \sum_{x\in \mathbb F_q^d} \sum_{s\in \mathbb F_q^*} \chi(s(w^\prime\cdot x-1))E(x) \right|^2 $$
$$= \frac{1}{|\Pi_{d-1}|} \sum_{w\in  H} \left|\frac{1}{q^d} \sum_{x\in E} \sum_{s\in \mathbb F_q^*} \chi(s(w^\prime\cdot x-1)) \right|^2. $$
Recall that  $H \subset \Pi_{d-1}$ can be identified with $\mathbb F_q^d\setminus \{(0,\dots,0)\}.$ Thus, if we dominate the sum over $w\in H$ by the sum over $w^\prime\in \mathbb F_q^d$, expand the square term, and use the orthogonality relation of $\chi$ over the variable $w^\prime\in \mathbb F_q^d,$ then it follows that
$$  0\leq \|T_{1}^{\star\star} E\|^2_{L^{2}(\Pi_{d-1}, d\sigma)}\leq \frac{1}{|\Pi_{d-1}|q^{d}} \sum_{x,x^{\prime} \in E, s,s^{\prime} \in \mathbb F_q^*: sx=s^{\prime} x^{\prime}} \chi(-s+s^{\prime}). $$  $$ =\frac{(q-1)|E|}{|\Pi_{d-1}|q^{d}} + \frac{1}{|\Pi_{d-1}|q^{d}} \sum_{x,x^{\prime} \in E, s,s^{\prime} \in \mathbb F_q^*: sx=s^{\prime} x^{\prime}, s\neq s^{\prime}} \chi(-s+s^{\prime})= \mbox{I} + \mbox{II}. $$
Since $|\Pi_{d-1}|\sim q^d$, it is clear that $\mbox{I}\sim \frac{|E|}{q^{2d-1}}.$ We claim that $ \mbox{II}\leq 0.$ Indeed, if we use a change of the variables by putting $s=t, \frac{s^\prime}{s}=u,$ then we see that
$$ \mbox{II}= \frac{1}{|\Pi_{d-1}|q^{d}} \sum_{x,x^\prime \in E, t,u\in \mathbb F_q^*: x=ux^\prime, u\neq 0,1} \chi(-t(1-u)).$$
Since $u\neq 1,$ the summation over $t\in \mathbb F_q^*$ is exactly $-1.$ Thus, our claim follows from the observation:
$$ \mbox{II}= \frac{-1}{|\Pi_{d-1}|q^{d}}\sum_{x,x^\prime \in E, u\in \mathbb F_q^*: x=ux^\prime, u\neq 0,1} 1 \leq 0.$$ Therefore, we conclude that 

$$\|T_{1}^{\star\star} E\|^2_{L^{2}(\Pi_{d-1}, d\sigma)}\leq \mbox{I} + \mbox{II}\leq \mbox{I} \sim \frac{|E|}{q^{2d-1}},$$
which implies that the inequality (\ref{inter22}) holds . We have finished proving Theorem \ref{newradon}. \end{proof}

\bibliographystyle{amsplain}

\end{document}